\newtheorem{assum}{Assumption}
\newtheorem{prop}{Proposition}
\newtheorem{remark}{Remark}
\newtheorem{lem}{Lemma}
\def\f{\frac}
\def\i1n{i=1,\cdots,n}
\def\j1n{j=1,\cdots,n}
\def\ij1n{i,j=1,\cdots,n}
\newcommand{\be}{\begin{equation}}
\newcommand{\ee}{\end{equation}}
\newcommand{\beq}{\begin{equation*}}
\newcommand{\eeq}{\end{equation*}}
\newtheorem{thm}{Theorem}
\title{\LARGE \bf
Output Feedback Control of the One-Phase Stefan Problem
}
\author{Shumon Koga, Mamadou Diagne, and Miroslav Krstic
\thanks{Shumon Koga and Miroslav Krstic are with the Department of Mechanical and Aerospace Engineering, U.C. San Diego, 9500 Gilman Drive, La Jolla, CA, 92093-0411, {\tt\small skoga@ucsd.edu} and {\tt\small krstic@ucsd.edu}}
\thanks{Mamadou Diagne is with the Department of Mechanical Engineering of the University of Michigan Ann Arbor, MI 48109-2102, USA,
{\tt\small mdiagne@umich.edu}}
}
\begin{document}

\maketitle
\thispagestyle{empty}
\pagestyle{empty}


\begin{abstract}
In this paper, a backstepping observer and an output feedback control law are designed for   the stabilization of the one-phase Stefan problem. The present result is  an improvement of the recent  full state feedback backstepping controller proposed  in our previous contribution. The one-phase Stefan problem describes the time-evolution of a temperature profile in a liquid-solid material and its liquid-solid moving interface. This phase transition problem is mathematically  formulated as a 1-D diffusion Partial Differential Equation (PDE) of the melting zone defined on a time-varying spatial domain described by an Ordinary Differential Equation (ODE). We propose a  backstepping observer allowing to estimate the temperature profile along the melting zone based on the available  measurement, namely, the solid phase length. The designed observer and the output feedback controller ensure the exponential stability of the estimation errors, the moving interface, and the ${\cal H}_1$-norm of the distributed temperature while keeping physical constraints, which is shown with the restriction on the gain parameter of the observer and the setpoint.
\end{abstract}

\section{INTRODUCTION}
Liquid-solid phase transition appear in various kinds of science and engineering processes. Typical applications include sea-ice melting and freezing\cite{wett91}, continuous casting of steel \cite{petrus2012}, crystal-growth \cite{conrad_90}, and thermal energy storage system\cite{Belen03}. The physical description of these processes is that, a temperature profile in a liquid-solid material promotes the dynamics of a liquid-solid interface due to the phase transition induced by melting or solidification processes. A mathematical model of such a physical process is called the Stefan problem\cite{Gupta03}, which is formulated by a diffusion PDE defined on a time-varying spatial domain. The domain's dynamics is described by an ODE actuated by the Neumann boundary value of the PDE state. 
 
For control objectives, infinite-dimensional frameworks that lead to significant mathematical complexities in the process characterization have been developed for the stabilization of  the temperature profile and the moving interface of a 1D Stefan problem. For instance,\cite{petrus2012} proposed  an enthalpy-based feedback  to  ensure asymptotical stability of the temperature profile and the moving boundary at a desired reference. In \cite{maidi2014} a  geometric control approach which enables  to adjust the position of a liquid-solid interface at a desired setpoint, and the exponential stability of the ${\cal L}_2$-norm of the distributed temperature is developed using a Lyapunov analysis. It is worth to mention that \cite{maidi2014}  considers  \emph{a priori} strictly positive  boundary input   and a moving boundary  which is assumed to be  a  non-decreasing function of time.

In this paper, a backstepping observer \cite{krstic2008boundary,andrew2004} and an output feedback control law are developed for the stabilization of the interface position and the temperature profile of the melting zone of  the one-phase Stefan problem  \cite{petrus2012, Gupta03, maidi2014}. The present work is an improvement of state-feedback result which was proposed in our previous contribution \cite{Shumon16}. While \cite{Izadi15} designed an output feedback controller that ensures the exponential stability of an unstable parabolic PDE system coupled with a priori-known moving interface, the observer design for the Stefan problem is rarely addressed in the literature. We propose  a backstepping transformation which enables to deal with the  one-phase Stefan problem in which the dynamics of the moving boundary is not explicitly given due to its state-dependency. Such a transformation is exploited to design an estimator and a controller standing as  an extension of the one proposed in \cite{krstic2009, Gian2011} for coupled  linear PDE-ODEs systems  defined on a fixed spatial domain. Our designed observer and the output feedback controller achieve the exponential stabilization of the estimation error, the temperature profile, and the moving interface to the desired  references  in the ${\cal H}_{1}$-norm under the restrictions on the observer gain and the setpoint.

This paper is organized as follows: The one-phase Stefan problem  is presented  in Section \ref{model}, and a brief review of full-state feedback result \cite{Shumon16} is stated in Section \ref{state-feedback}. Section \ref{sec:estimation} explains the observer design and the output feedback control problem with the statement of the main results. Section \ref{nonlineartarget} introduces a backstepping transformation for moving boundary problems which allows to design the observer gains and the output feedback control law. The physical constraints of this problem are stated and guaranteed by restricting the setpoint and the gain parameter of the observer in Section \ref{sec:constraints}. The Lyapunov stability of the closed-loop system is established in Section \ref{stability}.  Supportive numerical simulations are provided in Section \ref{simulation}. The paper ends with final remarks and future directions discussed in  Section \ref{conclusion}.

\section{Description of the Physical Process}\label{model}
\begin{figure}[htb]
\centering
\includegraphics[width=3.2in]{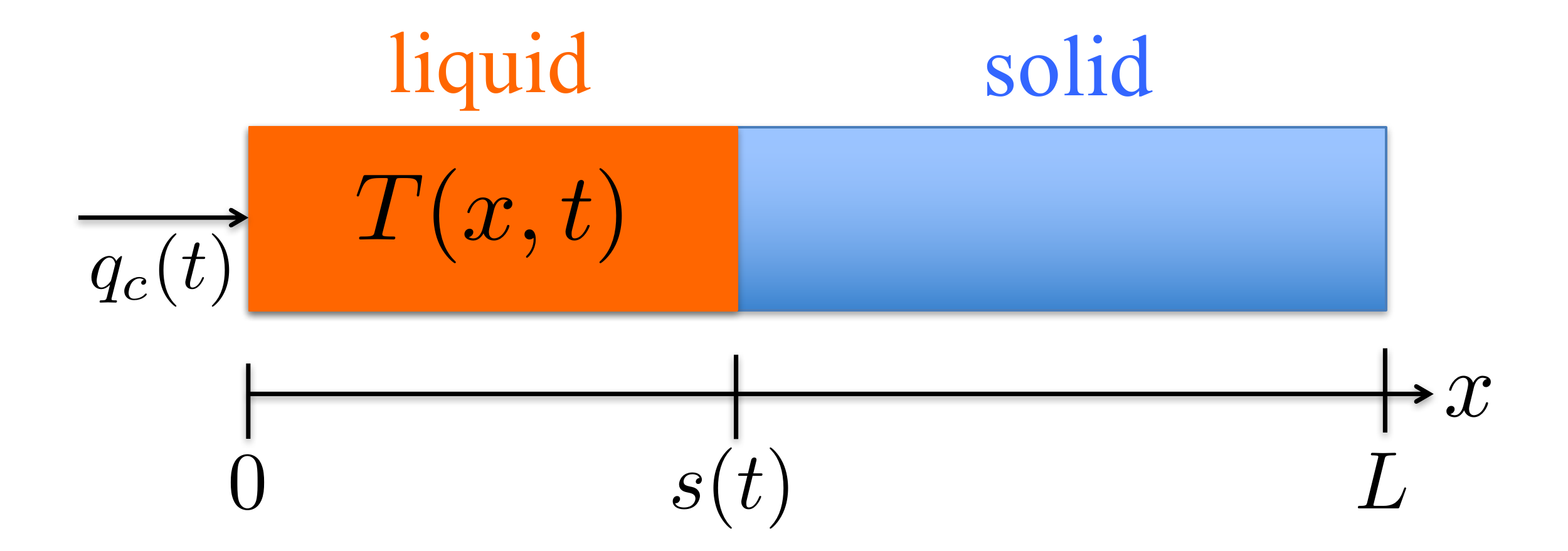}\\
\caption{Schematic of 1D Stefan problem.}
\label{fig:stefan}
\end{figure}
Consider a physical model which describes the melting or the solidification mechanism in a pure one-component material of length $L$ in one dimension. In order to describe the position at which phase transition from liquid to solid occurs (or equivalently, in the reverse direction) mathematically, we divide the domain $[0, L]$ into the two time-varying sub-domains, namely,  $[0,s(t)]$ occupied by the liquid phase, and $[s(t),L]$ by the solid  phase.  A heat flux is entering the system at  the boundary at $x=0$ of the liquid phase, which affects the dynamics of the liquid-solid interface. Assuming that the temperature in the liquid phase is not lower than the melting temperature of the material, we derive the following system described by:
\begin{itemize}
\item the diffusion equation of the temperature in the liquid phase which is written as
\begin{align}\label{eq:stefanPDE}
T_t(x,t)=\alpha T_{xx}(x,t), \hspace{1mm} 0\leq x\leq s(t), \hspace{1mm} \alpha :=\f{k}{\rho C_p}, 
\end{align}
with the boundary conditions
\begin{align}\label{eq:stefancontrol}
-k T_x(0,t)=&q_c(t), \\ \label{eq:stefanBC}
T(s(t),t)=&T_m,
\end{align}
and the initial values
\begin{align}\label{eq:stefanIC}
T(x,0)=T_0(x), \quad s(0) = s_0, 
\end{align}
where $T(x,t)$,  $q_c(t)$,  $\rho$, $C_p$ and  $k$ are the distributed temperature of the liquid phase, manipulated heat flux, liquid density, the liquid heat capacity, and the liquid heat conductivity, respectively.
\end{itemize}
\begin{itemize}
\item the local  energy balance at the liquid-solid interface $x=s(t)$  which yields to  the following ODE
\begin{align}\label{eq:stefanODE}
 \dot{s}(t)=-\beta T_x(s(t),t), \quad \beta :=\frac{k}{\rho \Delta H^*}
\end{align}
that describes the dynamics of moving boundary where $\Delta H^*$ denotes the latent heat of fusion.
\end{itemize}
For the sake of brevity, we refer the readers to  \cite{Gupta03}, where the Stefan condition in the case of a solidification process is derived. 
\begin{remark}As the moving interface  $s(t)$ is not explicitly given, the problem defined in  \eqref{eq:stefanPDE}--\eqref{eq:stefanODE}  is a highly nonlinear
problem. \end{remark}
\begin{remark}\label{assumption}Due to the so-called isothermal interface condition
 that prescribes the melting temperature $T_m$ at the interface through  \eqref{eq:stefanBC},
this form of the Stefan problem is a reasonable model only if the following conditions hold:
\begin{align}
T(x,t) \geq &T_m, \quad \textrm{ for all }\quad x \in [0,s(t)],\\
\label{moving}\dot{s}(t)\geq&0, \quad \textrm{ for all }\quad t\geq0. 
\end{align}
\end{remark}
From  Remark \ref{assumption}, it is plausible to assume $s_0>0$ and the existence of a positive constant $H>0$ such that
\begin{align}\label{eq:stefanICbound}
0\leq T_0(x)-T_m\leq H(s_0-x). 
\end{align}
We recall the following lemma that ensures the validity of the model \eqref{eq:stefanPDE}--\eqref{eq:stefanODE}.
\begin{lem}\label{lemma1}
For any $q_c(t)>0$ on the finite time interval $(0,\bar{t})$, $T(x,t)>T_m, ~\forall x\in(0,s(t))$ and $\forall t\in(0,\bar{t})$. And then $\dot{s}(t) >0$, $\forall t\in(0,\bar{t})$. 
\end{lem}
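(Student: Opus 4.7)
My plan is to recast the problem in terms of the shifted temperature $u(x,t) := T(x,t) - T_m$, which satisfies the heat equation $u_t = \alpha u_{xx}$ on the moving domain $0 \leq x \leq s(t)$ with Neumann data $-k u_x(0,t) = q_c(t) > 0$ at the left boundary, Dirichlet data $u(s(t),t)=0$ at the moving interface, and initial condition $u(x,0) = T_0(x) - T_m \geq 0$ by the standing assumption \eqref{eq:stefanICbound}. The two claims of the lemma then reduce to (i) strict positivity of $u$ in the interior of the parabolic cylinder $\{(x,t): 0<x<s(t),\, 0<t<\bar t\}$, and (ii) a strict sign for $u_x$ at the moving boundary $x=s(t)$.

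For (i) I would invoke the weak maximum principle on the (non-cylindrical) parabolic domain: since $u$ solves a linear parabolic equation, its minimum is attained on the parabolic boundary, consisting of the initial slice $t=0$, the fixed boundary $x=0$, and the moving boundary $x=s(t)$. On the initial slice $u\ge 0$; at $x=s(t)$, $u=0$. If the infimum were attained at an interior point with value $0$, the strong maximum principle would force $u\equiv 0$ throughout, which contradicts $-k u_x(0,t) = q_c(t) > 0$. The only remaining possibility is that the minimum value $0$ is taken only on the moving boundary (and possibly on $\{t=0\}$ if $T_0(s_0)=T_m$), while $u>0$ strictly in the interior. This yields $T(x,t) > T_m$ for $x\in(0,s(t))$, $t\in(0,\bar t)$.

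For (ii), I would apply Hopf's lemma at the boundary point $(s(t),t)$: since $u$ attains its minimum $0$ there and is strictly positive in a one-sided neighborhood inside the domain, the inward normal derivative is strictly negative, i.e.\ $u_x(s(t),t) < 0$. Substituting into the Stefan condition \eqref{eq:stefanODE} gives $\dot{s}(t) = -\beta\,u_x(s(t),t) > 0$ for all $t\in(0,\bar t)$, which is the second assertion.

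The main technical subtlety will be justifying the maximum principle and Hopf's lemma on a domain with a time-varying, a priori only Lipschitz, moving boundary $s(t)$, rather than on a fixed cylinder. Once one checks that the boundary has enough regularity for the interior-ball condition required by Hopf's lemma (which follows from the parabolic regularity of $s$ implied by \eqref{eq:stefanODE} and the smoothness of the underlying heat equation), the rest of the argument is a direct application of classical parabolic maximum-principle machinery; the positivity of the input flux $q_c$ is exactly what rules out the degenerate case $u\equiv 0$.
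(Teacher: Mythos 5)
Your argument follows exactly the route the paper indicates for this lemma---the parabolic maximum principle on the non-cylindrical domain to get $T>T_m$ in the interior, then Hopf's lemma at $x=s(t)$ combined with the Stefan condition \eqref{eq:stefanODE} to get $\dot{s}(t)>0$---which the paper does not spell out itself but attributes to the cited references. Two small points worth tightening: you list $x=0$ as part of the parabolic boundary but never rule out the minimum being attained there (this is immediate, since $u_x(0,t)=-q_c(t)/k<0$ means $u$ dips below $u(0,t)$ just inside the domain, so $(0,t)$ cannot be a minimum point), and at $x=s(t)$ it is the \emph{outward} normal derivative that Hopf's lemma makes strictly negative (equivalently the inward one strictly positive), though your stated conclusion $u_x(s(t),t)<0$ carries the correct sign.
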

The proof of Lemma \ref{lemma1} is provided by maximum principle and Hopf's Lemma as shown in \cite{Gupta03}, \cite{nonlinearPDE}.

\section{State Feedback Control }\label{state-feedback}
In this section, we recall the main result of the backstepping state-feedback control of the 1D Stefan problem  \cite{Shumon16}. The control objective is to drive the moving boundary $s(t)$ to a reference setpoint $s_r$ by manipulating the heat controller $q_c(t)$. From a physical point of view, for a positive heat controller $q_c(t)$,  the irreversibility of the process restrict a priori the choice of the desired setpoint $s_r$. The control objective can be achieved  only if such constraints are satisfied. The following assumption is stated to satisfy the aforementioned physical constraints. 
\begin{assum}\label{assum}
The setpoint $s_r$ is chosen to satisfy the following inequality
\begin{align}\label{compatibility}
s_r>s_0+\frac{\beta}{\alpha }\int_{0}^{s_0} (T_0(x)-T_m) dx. 
\end{align}
\end{assum}
Assumption \ref{assum} is necessary to achieve the control objective due to the energy conservation law given by
\begin{align}\label{1stlaw}
\frac{d}{dt}\left(\frac{1}{\alpha}\int_0^{s(t)} (T(x,t)-T_m) dx +\frac{1}{\beta} s(t)\right) = \frac{q_c(t)}{k}. 
\end{align} 
The left hand side of \eqref{1stlaw} denotes the growth of internal energy. With a positive heat control $q_{c}(t)>0$, the internal energy for a given setpoint must be greater than the initial internal energy, which leads to the condition \eqref{compatibility}. 

Suppose that both $T(x,t)$ and $s(t)$ are measured  $\forall x\in [0, s(t)]$ and $\forall t\geq 0$. Then, the following theorem holds:  
\begin{thm}\label{Theo-1}
Consider a closed-loop system consisting of the plant \eqref{eq:stefanPDE}--\eqref{eq:stefanODE} and the control law
\begin{align}\label{Fullcontrol}
q_c(t)=-ck \left(\frac{1}{\alpha}\int_0^{s(t)} (T(x,t)-T_m) dx +\frac{1 }{\beta}(s(t)-s_r)\right). 
\end{align}
where $c>0$ is an arbitrary controller gain. Assume that the initial values $(T_0(x), s_0)$ are compatible with the control law and satisfies \eqref{eq:stefanICbound}. Then, for any reference setpoint $s_r$ satisfying \eqref{compatibility}, the closed-loop system is exponentially stable in the sense of the norm
\begin{align}
||T-T_m||_{{\cal H}_1}^2+(s(t)-s_r)^2.
\end{align}
\end{thm}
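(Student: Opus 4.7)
The plan is to use the backstepping method adapted to the moving-boundary setting. First, I would introduce the shifted temperature $u(x,t) := T(x,t) - T_m$, so that the interface condition becomes $u(s(t),t)=0$, and rewrite \eqref{eq:stefanPDE}--\eqref{eq:stefanODE} in terms of $u$. Then I would define the reference-error functional
\begin{equation*}
X(t) := \frac{1}{\alpha}\int_0^{s(t)} u(x,t)\, dx + \frac{1}{\beta}(s(t)-s_r),
\end{equation*}
and observe that the conservation law \eqref{1stlaw} yields $\dot{X}(t) = q_c(t)/k$; under the feedback \eqref{Fullcontrol} this gives $\dot X = -cX$, so $X(t) = X(0)e^{-ct}$.

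Next, I would introduce a Volterra-type backstepping transformation
\begin{equation*}
w(x,t) = u(x,t) - \int_x^{s(t)} k(x,y)\, u(y,t)\, dy - \phi(x,t)(s(t)-s_r),
\end{equation*}
and choose the kernel $k$ and the function $\phi$ so that $(w,s)$ satisfies a target system consisting of a heat equation for $w$ with homogeneous Neumann condition at $x=0$ and Dirichlet condition $w(s(t),t)=0$ at the moving interface, coupled with the damped interface ODE
\begin{equation*}
\dot{s}(t) = -\beta w_x(s(t),t) - c(s(t)-s_r).
\end{equation*}
Matching boundary conditions termwise leads to an explicit $\phi$ and a kernel PDE solvable in terms of modified Bessel functions, as in \cite{Shumon16}. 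I would also establish invertibility of the map by constructing a Volterra inverse of the same form, yielding two-sided norm equivalence between the $(u, s-s_r)$ and $(w, s-s_r)$ variables in ${\cal H}_1$.

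For stability I would work in target coordinates with the Lyapunov functional
\begin{equation*}
V(t) = \frac{1}{2}\|w(\cdot,t)\|_{{\cal L}_2}^2 + \frac{a}{2}\|w_x(\cdot,t)\|_{{\cal L}_2}^2 + \frac{b}{2}(s(t)-s_r)^2,
\end{equation*}
with positive weights $a,b$ to be tuned. Differentiating along the target system, the moving-boundary cross-terms produced by Leibniz's rule are partially cancelled by the $-\beta w_x(s,t)$ contribution to $\dot s$ and partially absorbed by the $-c(s-s_r)^2$ damping via Young's and Poincar\'e-type inequalities, yielding $\dot V \leq -\mu V$ for some $\mu>0$. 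Exponential decay of $\|T-T_m\|_{{\cal H}_1}^2 + (s-s_r)^2$ then follows from the norm equivalence established above.

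The main obstacle is the circular dependence between the stability argument and the validity of the model: Lemma~\ref{lemma1} requires $q_c(t)>0$, which in turn requires $X(t)<0$. Assumption~\ref{assum} combined with \eqref{eq:stefanICbound} forces $X(0)<0$, and because $X(t)=X(0)e^{-ct}$ decays monotonically toward zero without changing sign, $q_c(t)=-ckX(t)>0$ is preserved for all $t>0$. This guarantees $\dot s(t)>0$ and $u>0$ via Lemma~\ref{lemma1}, closing the loop between well-posedness and stability. A secondary technical difficulty is carefully tracking the boundary-motion terms in the ${\cal H}_1$ Lyapunov estimate so that the above cancellation is genuinely negative-definite rather than merely negative-semidefinite.
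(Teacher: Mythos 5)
Your overall route is the same as the paper's: shift to $u=T-T_m$, apply a Volterra backstepping transformation with a term affine in $s(t)-s_r$ to reach a target system with a damped interface ODE, establish the physical constraints $q_c(t)>0$ and $s(t)<s_r$ from Assumption~\ref{assum}, and run an ${\cal H}_1$ Lyapunov argument in target coordinates. Your handling of the positivity constraint is in fact the cleanest part of the proposal: noting that the control law reads $q_c=-ckX$ with $X$ the internal-energy error, so that \eqref{1stlaw} gives $\dot X=-cX$ and hence sign preservation of $q_c$, is exactly the mechanism the paper relies on (there it is only implicit in the remark that \eqref{compatibility} encodes $X(0)<0$). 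Two minor corrections: the direct kernel for this design is the polynomial $-\frac{c}{\alpha}(x-y)$, not a Bessel function (the inverse kernel is sinusoidal, cf.\ \eqref{whatinv}; Bessel kernels arise only in the observer design), and you should close the second constraint explicitly --- $s(t)<s_r$ follows from $X(t)<0$ together with $u>0$, since $\frac{1}{\beta}(s(t)-s_r)=X(t)-\frac{1}{\alpha}\int_0^{s(t)}u\,dx<0$.

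The genuine gap is in the Lyapunov step. You claim the moving-boundary cross-terms are absorbed by the $-c(s-s_r)^2$ damping via Young's and Poincar\'e's inequalities, yielding $\dot V\le-\mu V$ directly. They are not absorbable: the target dynamics contain the forcing term $\frac{c}{\beta}\dot s(t)X(t)$ (see the analogous equation \eqref{obsvtarPDE}), and after Young's and Agmon's inequalities the derivative of the functional has the form $\dot V\le -bV + a\,\dot s(t)\,V$ with $a>0$, exactly as in \eqref{estimatelyap}. The positive term is proportional to $\dot s(t)$, which is not controlled by $V$ and cannot be dominated by the damping, so the estimate is not negative definite as it stands. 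The missing idea is to use the two physical constraints qualitatively rather than quantitatively: since $\dot s(t)>0$ and $s_0<s(t)<s_r$, one passes to the weighted functional $W=V e^{-a s(t)}$ as in \eqref{finlyap}, for which
\begin{equation*}
\dot W=\left(\dot V - a\dot s(t) V\right)e^{-as(t)}\le -bW,
\end{equation*}
and then removes the weight using $e^{-as_r}\le e^{-as(t)}$ to recover exponential decay of $V$ with the constant $e^{as_r}$ in front. Without this step the argument stalls precisely at the point you flag as a ``secondary technical difficulty.''
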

\begin{proof}
The control law \eqref{Fullcontrol} was derived using the following backstepping  transformation
\begin{align}
u(x,t)= &T(x,t)-T_m,  \quad X(t)=s(t)-s_r, \\
\label{eq:DBST}
w(x,t)=&u(x,t)-\frac{c}{\alpha} \int_{x}^{s(t)} (x-y)u(y,t) dy\nonumber\\
&+\frac{c}{\beta}(s(t)-x) X(t), 
\end{align}
introduced in  \cite{Shumon16} with the aim to transform the system \eqref{eq:stefanPDE}--\eqref{eq:stefanODE} into a target system. Noting that $q_c(t)>0$ is required by Remark \ref{assumption} and Lemma \ref{lemma1} to remain the model validity, the overshoot beyond the reference $s_r$ is prohibited to achieve the control objective $s(t) \to s_r$ due to its irreversible process \eqref{moving}, which means $s(t)<s_r$ is required to be satisfied for $\forall t>0$. These two conditions 
\begin{align}\label{physical constraints}
q_c(t)>0, \quad  s(t)<s_r, \quad  \forall t>0,
\end{align}
namely  the "physical constraints", are satisfied under the setpoint restriction \eqref{compatibility}. With the help of \eqref{moving} and \eqref{physical constraints}, the target system was shown to be exponentially stable.  The detailed  proof of Theorem \ref{Theo-1} is established in \cite{Shumon16}.
\end{proof}
\section{Control and Estimation Problem Statement and Main Results}\label{sec:estimation}
\subsection{ Problem Statement }\label{statement}
In  \cite{Shumon16}, the authors considered the full-state feedback control problem, in which the controller implementation requires available   measurements of the  temperature profile $T(x,t)$ along the domain $[0, s(t)]$ and the moving interface position $s(t)$. Under these conditions, the practical relevance of the proposed solution is relatively limited. In the present work, we extended the full-state feedback results considering  moving interface position $s(t)$ as the only available measurement.
\subsection{Observer Design}\label{Estimation}
 Suppose that the interface position is obtained as the only available measurement $Y(t)=s(t)$. Then, denoting the estimates of the temperature $\hat T(x,t)$, the following theorem holds:
\begin{thm}\label {observer}
Consider the following  closed-loop system  of the observer 
\begin{align}
 \label{observerPDE}\hat{T}_t(x,t)=&\alpha \hat{T}_{xx}(x,t) \nonumber\\
 & - P_1(x,s(t))\left(\frac{\dot{Y}(t)}{\beta} + \hat{T}_x(s(t),t)\right),  \\
 \label{observerBC2}-k\hat{T}_x(0,t)=&q_c(t), \\
\label{observerBC1}\hat{T}(s(t),t)=&T_m, 
\end{align}
where $x\in[0,s(t)]$, and the observer gain $P_1(x,s(t))$ is 
\begin{align}\label{P1x}
P_1(x,s(t)) =  -\lambda s(t)\frac{I_1\left(\sqrt{\frac{\lambda}{\alpha}\left(s(t)^2-x^2\right)}\right)}{\sqrt{\frac{\lambda}{\alpha} \left(s(t)^2-x^2\right)}}
\end{align}
with an observer gain $\lambda>0 $. Assume that the two physical constraints \eqref{physical constraints} are satisfied. Then, for all  $\lambda >0$, the observer error system is exponentially stable in the sense of the norm 
\begin{align}
||T-\hat{T}||_{{\cal H}_1}^2 . 
\end{align}
\end{thm}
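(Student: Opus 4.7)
The plan is to exploit the standard backstepping observer strategy of Krstic--Smyshlyaev, adapted to the moving-boundary setting, by first reducing to an error system and then constructing a target system whose stability is easy to read off.

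First, I set $\tilde{T}=T-\hat{T}$ and use the Stefan condition \eqref{eq:stefanODE} to rewrite the innovation: since $\dot{Y}(t)/\beta=\dot{s}(t)/\beta=-T_x(s(t),t)$, the quantity $\dot{Y}(t)/\beta+\hat{T}_x(s(t),t)$ equals $-\tilde{T}_x(s(t),t)$. Subtracting \eqref{observerPDE}--\eqref{observerBC1} from \eqref{eq:stefanPDE}--\eqref{eq:stefanBC} then yields the linear error system
\begin{align*}
\tilde{T}_t(x,t)&=\alpha \tilde{T}_{xx}(x,t)-P_1(x,s(t))\tilde{T}_x(s(t),t),\\
\tilde{T}_x(0,t)&=0,\qquad \tilde{T}(s(t),t)=0,
\end{align*}
which is a linear PDE on the time-varying domain $[0,s(t)]$ driven by the boundary trace $\tilde{T}_x(s(t),t)$. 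Notably, this system decouples from the $X(t)=s(t)-s_r$ dynamics, so stability of the observer error is a purely PDE question.

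Next, I would introduce a Volterra-type backstepping transformation of the form
\begin{equation*}
\tilde{T}(x,t)=\tilde{w}(x,t)-\int_x^{s(t)}P(x,y)\tilde{w}(y,t)\,dy,
\end{equation*}
designed to map the error system onto the target system
\begin{align*}
\tilde{w}_t(x,t)&=\alpha \tilde{w}_{xx}(x,t)-\lambda \tilde{w}(x,t),\\
\tilde{w}_x(0,t)&=0,\qquad \tilde{w}(s(t),t)=0.
\end{align*}
Matching coefficients by the usual computation (differentiating the transformation in $t$ and twice in $x$, and demanding that the target dynamics hold) produces a Klein--Gordon-type kernel PDE for $P(x,y)$ on the triangle $\{0\le x\le y\le s(t)\}$; the solution is explicit in terms of the modified Bessel function $I_1$, and the observer gain is recovered as $P_1(x,s(t))=-\alpha P_y(x,s(t))$, which one verifies yields exactly the expression in \eqref{P1x}. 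Boundedness of the kernel on $[0,s(t)]^2$, together with the Volterra structure, gives invertibility of the transformation together with two-sided $\mathcal{H}_1$ estimates relating $\tilde{T}$ and $\tilde{w}$, so it suffices to prove exponential decay of $\|\tilde{w}\|_{\mathcal{H}_1}$.

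For the target system I would use the composite Lyapunov functional
\begin{equation*}
V(t)=\frac{a}{2}\int_0^{s(t)}\tilde{w}(x,t)^2\,dx+\frac{1}{2}\int_0^{s(t)}\tilde{w}_x(x,t)^2\,dx,
\end{equation*}
for some $a>0$. Time-differentiation using Leibniz's rule produces boundary contributions at $x=s(t)$ of the form $\tfrac{1}{2}\dot{s}(t)\tilde{w}(s,t)^2$ and $\tfrac{1}{2}\dot{s}(t)\tilde{w}_x(s,t)^2$; the first vanishes by the Dirichlet condition, and the second has a favourable sign to be discarded because $\dot{s}(t)\ge 0$ by the physical constraint \eqref{moving} assumed in the theorem. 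Integrating by parts in the remaining interior terms, using Poincaré (enabled by $\tilde{w}(s(t),t)=0$) and the Neumann boundary condition at $x=0$, yields $\dot V\le -cV$ for some $c>0$ as long as $\lambda>0$, giving exponential decay of $V$ and hence of $\|\tilde{w}\|_{\mathcal{H}_1}$.

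The main obstacle I anticipate is the moving-boundary bookkeeping: the kernel PDE is posed on a triangular domain whose hypotenuse itself moves with $s(t)$, so care is needed to confirm that the explicit $I_1$ formula solves the kernel equation for every frozen value of $s(t)$ and that the spatial gradients of $P(x,y)$ remain uniformly bounded on bounded intervals of $s$. Once that is in place, the time-differentiation of the Lyapunov functional produces moving-domain boundary terms that must be sign-definite or cancellable; the nonnegativity of $\dot{s}(t)$ guaranteed by Lemma \ref{lemma1} and the physical constraints \eqref{physical constraints} is the crucial ingredient that makes these extra terms harmless and closes the argument for all $\lambda>0$.
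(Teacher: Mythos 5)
Your proposal follows essentially the same route as the paper: subtract the observer from the plant and use the Stefan condition to write the innovation as $-\tilde{T}_x(s(t),t)$, map the resulting error system by a Volterra transformation with a modified-Bessel kernel onto the damped heat equation $\tilde{w}_t=\alpha\tilde{w}_{xx}-\lambda\tilde{w}$ on $[0,s(t)]$, run an ${\cal H}_1$ Lyapunov estimate in which the moving-boundary contributions are handled via $\dot{s}(t)\geq 0$, and transfer the decay back through the inverse transformation. Two details deserve correction. First, your gain formula $P_1(x,s(t))=-\alpha P_y(x,s(t))$ is not the right relation here: because the injected trace is the Neumann value $\tilde{u}_x(s(t),t)$ and the target state obeys $\tilde{w}(s(t),t)=0$, the term $P_y(x,s(t))\tilde{w}(s(t),t)$ drops out of the integration by parts and the surviving boundary term is $\alpha P(x,s(t))\tilde{w}_x(s(t),t)$; hence $P_1(x,s(t))=-\alpha P(x,s(t))$ (the kernel itself evaluated at $y=s(t)$, as in \eqref{p3gain}), and it is this, not the $y$-derivative (which would bring in $I_0$), that reproduces \eqref{P1x}. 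Second, the raw Leibniz term in $\frac{d}{dt}\frac{1}{2}\int_0^{s(t)}\tilde{w}_x^2\,dx$ is $+\frac{\dot{s}}{2}\tilde{w}_x(s(t),t)^2$, which has the \emph{unfavourable} sign; the favourable net term $-\frac{\dot{s}}{2}\tilde{w}_x(s(t),t)^2$ only appears after combining it with the boundary term $[\tilde{w}_x\tilde{w}_t]_{x=s(t)}=-\dot{s}\,\tilde{w}_x(s(t),t)^2$, which in turn uses $\tilde{w}_t(s(t),t)=-\dot{s}\,\tilde{w}_x(s(t),t)$ obtained by differentiating the moving Dirichlet condition in time. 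With those two repairs your argument coincides with the paper's proof.
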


\subsection{Output Feedback Control }
The design of the output feedback controller is achieved using the reconstruction of the state through the exponentially convergent observer defined in Theorem \ref{observer} and based on the only available messurement $Y(t)$. We propose the following theorem:

\begin{thm}\label{outputthm}
Consider the closed-loop system  \eqref{eq:stefanPDE}--\eqref{eq:stefanODE} with the measurement $Y(t)=s(t)$ and the observer \eqref{observerPDE}-\eqref{observerBC1} and the output feedback control law
\begin{align}\label{outctr}
q_c(t)=&-ck\left(\frac{1}{\alpha} \int_{0}^{Y(t)} \left(\hat{T}(x,t)-T_m\right) dx\right.\notag\\
&\left.+\frac{1 }{\beta} \left(Y(t)-s_r\right) \right). 
\end{align}
Assume that the initial values $\left(\hat{T}_{0}(x), s_0\right)$ are compatible with the control law and the initial plant states $(T_{0}(x), s_0)$ satisfy \eqref{eq:stefanICbound}. Additionally, assume that the upper bound of the initial temperature is known, i.e. the Lipschitz constant $H$ in \eqref{eq:stefanICbound} is known. Then, by setting an initial temperature estimation $\hat{T}_0(x)$, gain parameter of the observer $\lambda$, and the setpoint $s_r$ to satisfy 
\begin{align}\label{restriction1}
\hat{T}_0(x) =&T_m +  \hat{H}(s_0 - x), \quad \\
\label{restriction2} \lambda <& \frac{4\alpha}{s_0^2} \left(1-\frac{H}{\hat{H}}\right), \\
\label{restriction3} s_r>&s_0+\frac{\beta s_0^2}{2\alpha } \hat{H}, 
\end{align}
with a choice of a parameter $\hat{H}>H$, the closed-loop system is exponentially stable in the sense of the norm 
\begin{align}
&||T-\hat{T}||_{{\cal H}_1}^2+||T-T_m||_{{\cal H}_1}^2+(s(t)-s_r)^2 .
\end{align}
\end{thm}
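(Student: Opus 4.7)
The plan is to combine Theorem \ref{observer} with a backstepping/Lyapunov analysis of the observer-based closed loop, conditional on first verifying that the physical constraints \eqref{physical constraints} still hold under output feedback. Writing the estimation error as $\tilde{T} := T - \hat{T}$ and subtracting \eqref{observerPDE}--\eqref{observerBC1} from \eqref{eq:stefanPDE}--\eqref{eq:stefanBC}, I would first check that $\tilde T$ satisfies an autonomous PDE on the time-varying domain with $\tilde T_x(0,t)=0$, $\tilde T(s(t),t)=0$, driven by the boundary-injection term $P_1(x,s(t))\,\tilde T_x(s(t),t)$ (the unknown $\dot Y/\beta$ in \eqref{observerPDE} collapses against the Stefan condition \eqref{eq:stefanODE}). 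This gives a coupled error system that does not depend on the controller and to which the observer result of Theorem \ref{observer} applies once \eqref{physical constraints} is established.

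Next I would verify the physical constraints. By construction \eqref{restriction1} together with \eqref{eq:stefanICbound}, the initial error obeys $-(\hat H - H)(s_0-x)\leq \tilde T(x,0)\leq 0$, so $\hat T(\cdot,0)\geq T(\cdot,0)\geq T_m$. A maximum-principle/comparison argument on the error PDE, in which the gain bound \eqref{restriction2} prevents the boundary injection $P_1\,\tilde T_x(s,t)$ from overturning the sign, should propagate $\tilde T(x,t)\leq 0$ for all $t\geq 0$. Combining this with the setpoint condition \eqref{restriction3} and the $\hat T$-version of the energy identity \eqref{1stlaw}, the control law \eqref{outctr} reduces at $t=0$ to a strictly negative expression, hence $q_c(0)>0$; by Lemma \ref{lemma1} and a continuity/contradiction argument standard for state-constrained Stefan problems, $q_c(t)>0$ and $s(t)<s_r$ are preserved for all $t\geq 0$, reproducing \eqref{physical constraints} in the output-feedback setting.

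With the constraints secured, Theorem \ref{observer} gives exponential ${\cal H}_1$-decay of $\tilde T$. For the observer-controller subsystem I would apply a backstepping transformation analogous to \eqref{eq:DBST} but with $\hat T - T_m$ and $Y - s_r$ replacing $u$ and $X$, mapping the observer-controller pair into a target system identical to that of Theorem \ref{Theo-1} up to a perturbation driven by $\tilde T$ through the observer gain. A composite Lyapunov functional $V = V_1 + d\,V_2$, with $V_1$ the error functional from the proof of Theorem \ref{observer}, $V_2$ the state-feedback target functional from the proof of Theorem \ref{Theo-1}, and $d>0$ small, should yield $\dot V \leq -\mu V$ after absorbing the cross-terms by Young's inequality; inverting the bounded backstepping transformations then converts ${\cal H}_1$-decay of the target variables into the claimed decay of $\|T-T_m\|_{{\cal H}_1}^2 + \|T-\hat T\|_{{\cal H}_1}^2 + (s(t)-s_r)^2$.

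The main obstacle I expect is the second step, the preservation of \eqref{physical constraints} under output feedback. In \cite{Shumon16} these constraints followed directly from a maximum principle on $T$; here they must be propagated through a sign comparison between $T$ and $\hat T$ along a moving boundary whose evolution itself depends on the comparison. The specific forms of \eqref{restriction1}--\eqref{restriction3}, and in particular the need for a known Lipschitz constant $H$, are precisely what make this comparison tractable: \eqref{restriction1} gives a favorable initial sign, \eqref{restriction2} keeps the boundary injection from spoiling it, and \eqref{restriction3} absorbs the worst-case internal-energy gap $(\hat H - H)$ into the setpoint margin. Everything downstream is comparatively routine Lyapunov bookkeeping.
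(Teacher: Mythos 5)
Your overall architecture matches the paper's: derive the autonomous error system, establish the physical constraints via sign conditions on $\tilde T$, then run a composite Lyapunov analysis on the backstepping target systems, with the three restrictions playing exactly the roles you assign them. However, two of the key steps are glossed over in ways that would not close as written. First, the preservation of $q_c(t)>0$: a ``continuity/contradiction argument'' does not work on its own here, because the output-feedback controller is perturbed by the estimation error at every instant, not only at $t=0$. The paper's mechanism is to differentiate the control law along the observer dynamics and obtain the explicit ODE $\dot q_c(t) = -c\, q_c(t) + \bigl(1+\int_0^{s(t)}P(x,s(t))\,dx\bigr)\tilde u_x(s(t),t)$ (equation \eqref{odecontrol}), whose forcing term is nonnegative because $\tilde u\le 0$ together with $\tilde u(s(t),t)=0$ gives $\tilde u_x(s(t),t)>0$ by Hopf's lemma; this yields $\dot q_c\ge -cq_c$ and hence $q_c(t)>0$ from $q_c(0)>0$. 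You have the ingredient $\tilde T\le 0$ but never draw the Hopf-lemma conclusion on $\tilde T_x(s(t),t)$ nor the $\dot q_c$ equation, and without them the positivity of $q_c$ for $t>0$ is unsupported. Relatedly, the paper does not prove $\tilde u\le 0$ by a comparison argument on the error PDE with its boundary-injection term (which would require an a priori sign on $P_1\tilde u_x(s,t)$, circular with what you are trying to prove); it passes to the target variable $\tilde w$, where the injection term is absent and the maximum principle applies cleanly, and the gain restriction \eqref{restriction2} enters only to guarantee $\tilde w(x,0)<0$ via a bound on the inverse-transformation kernel \eqref{bessel} --- not to ``prevent the injection from overturning the sign.''

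Second, in the Lyapunov step, Young's inequality alone does not give $\dot V\le-\mu V$: the target system \eqref{obsvtarPDE}--\eqref{obsvtarODE} and the moving-boundary terms in $\frac{d}{dt}\|\cdot\|_{{\cal H}_1}^2$ produce a contribution of the form $a\dot s(t)V_{tot}$ that is not dominated by the dissipative terms. The paper removes it with the weighted functional $V=V_{tot}e^{-as(t)}$, which satisfies $\dot V\le -bV$, and then uses $s(t)<s_r$ a second time to recover $V_{tot}\le e^{as_r}V_{tot}(0)e^{-bt}$. This exponential-weighting step is a genuine idea your plan is missing, not routine bookkeeping.
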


The design procedure is presented in Section \ref{nonlineartarget} and \ref{sec:constraints}, and the proofs of Theorem \ref{observer}  and Theorem \ref{outputthm} are provided in Section \ref{stability}.

\section{Backstepping Transformation for Moving Boundary Formulation}\label{nonlineartarget}
We recall that  the reference error of liquid temperature and the moving interface are  denoted as $u(x,t)=T(x,t)-T_m$ and $X(t)=s(t)-s_r$, respectively (see Section \ref{state-feedback}). Defining the controller and the  available measurement as  $U(t) = -q_c(t)/k$ and $Y(t)=s(t)$, respectively,  the coupled system \eqref{eq:stefanPDE}--\eqref{eq:stefanODE} are written as
\begin{align}\label{eq:stefanPDE1}
u_{t}(x,t) =& \alpha u_{xx}(x,t), \quad 0\leq x\leq s(t)\\
u_x(0,t) =& U(t),\quad \label{eq:stefanBC1}u(s(t),t) =0,\\
\label{eq:stefanODE1}\dot{X}(t) =&-\beta u_x(s(t),t).
\end{align}
For the reference error  system, namely, the $u$-system  \eqref{eq:stefanPDE1}--\eqref{eq:stefanODE1},  we  consider the following observer:
\begin{align}
\hat{u}_t(x,t)=&\alpha  \hat{u}_{xx}(x,t)\nonumber\\&+P_1(x,s(t))\left(-\frac{1}{\beta}\dot{Y}(t)-\hat{u}_x(s(t),t)\right), \label{obsfirst}\\
\hat{u}(s(t),t)=&0, \\
\label{obslast}\hat{u}_x(0,t)=&U(t), 
\end{align}
where $P_1(x,s(t))$ is the observer gain that needs to be determined. Defining   error variable  of $u$-system as $\tilde{u}(x,t)=u(x,t)-\hat{u}(x,t)$ and combining \eqref{eq:stefanPDE1}--\eqref{eq:stefanODE1} with \eqref{obsfirst}--\eqref{obslast},  the $\tilde{u}$-system is written as
\begin{align}\label {eq:errorPDE}
\tilde{u}_t(x,t)=& \alpha \tilde{u}_{xx}(x,t)-P_1(x,s(t))\tilde{u}_x(s(t),t),\\
\label{eq:errorBC2}\tilde{u}(s(t),t)=&0, \quad \tilde{u}_x(0,t)=0.
\end{align}

\subsection{Observer Target System}
\subsubsection{Direct transformation}
In this section, we introduce a moving boundary backstepping transformation and observer gains motivated by a fixed boundary backstepping transformation \cite{krstic2009, Gian2011} as
\begin{align}\label{eq:errorDBST}
\tilde{u}(x,t)=&\tilde{w}(x,t)+\int_x^{s(t)} P(x,y)\tilde{w}(y,t) dy, 
\end{align}
which transforms the $\tilde{u}$-system in \eqref{eq:errorPDE}-\eqref{eq:errorBC2} into the following exponentially stable target system 
\begin{align}\label{wtilde1}
\tilde{w}_t(x,t)=& \alpha \tilde{w}_{xx}(x,t)- \lambda \tilde{w}(x,t),\\
\tilde{w}(s(t),t)=&0, \quad \tilde{w}_x (0,t)=0.\label{wtilde3}
\end{align}
Taking the derivative of \eqref {eq:errorDBST} with respect to $t$ and $x$ along the solution of \eqref{wtilde1}-\eqref{wtilde3} respectively, the solution of the gain kernel and the observer gain are given by
\begin{align}
\label{p1gain}P(x,y) =& \frac{\lambda}{\alpha} y\frac{I_1\left(\sqrt{\frac{\lambda}{\alpha} (y^2-x^2)}\right)}{\sqrt{\frac{\lambda}{\alpha} (y^2-x^2)}}, \\
\label{p3gain}P_1(x,s(t))=&-\alpha P(x,s(t)), 
\end{align}
where $I_1(x)$ is a modified Bessel function of the first kind. In order to show the negativity of estimation error, we recall a following lemma. The importance of such a property is stated in Section \ref{sec:constraints}. 
\begin{lem}\label{wmaximum}
Suppose that $\tilde{w}(0,t)<0$. Then, the solution of \eqref{wtilde1}-\eqref{wtilde3} satisfies 
\begin{align}
\tilde{w}(x,t)<0, \quad \forall x\in(0,s(t)), \quad \forall t>0.
\end{align}
\end{lem}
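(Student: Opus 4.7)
The plan is to run a parabolic maximum-principle argument, with the reaction term and the moving boundary being the two features that need care. The whole lemma is a sign-propagation fact for a reaction-diffusion equation on a time-varying domain.

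First, I would eliminate the zeroth-order term by the standard exponential shift $v(x,t) := e^{\lambda t}\tilde{w}(x,t)$. Then $v$ satisfies the pure heat equation $v_t = \alpha v_{xx}$ on $\{(x,t) : 0\le x\le s(t),\,t>0\}$, with the same structure at the boundaries, $v(s(t),t)=0$ and $v_x(0,t)=0$. Because $e^{\lambda t}>0$, the sign of $v$ coincides with the sign of $\tilde{w}$, so it suffices to show $v<0$ on the open interior. Note also that the assumption $\tilde{w}(0,t)<0$ translates into $v(0,t)<0$ for all $t\ge 0$, and, as will be ensured by the initialization chosen in Section \ref{sec:constraints}, the initial data satisfies $v(x,0)=\tilde{w}(x,0)\le 0$.

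Next, I would argue by contradiction. Assume $v(x_{0},t_{0})\ge 0$ for some $x_{0}\in(0,s(t_{0}))$ and consider $\sup v$ over the closed parabolic cylinder $Q_{t_0}=\{(x,t):0\le x\le s(t),\,0\le t\le t_{0}\}$. By the weak maximum principle for $v_t=\alpha v_{xx}$, applicable on the time-varying domain because Lemma \ref{lemma1} guarantees $\dot{s}(t)>0$ (so that the boundary curve $x=s(t)$ is outward-moving and the parabolic boundary is well-defined), the supremum is attained on the parabolic boundary. That parabolic boundary decomposes into three pieces: the initial segment $\{t=0\}$, where $v\le 0$; the lateral boundary $x=0$, where $v<0$ by hypothesis; and the moving interface $x=s(t)$, where $v=0$. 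Consequently the supremum equals zero and can only be attained at points on the moving interface.

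To rule this out I would invoke the strong maximum principle together with Hopf's lemma at a contact point $(s(t^{*}),t^{*})$. Since $\dot{s}>0$, the moving boundary meets the domain transversally and satisfies an interior-sphere-type condition, so Hopf's lemma gives $v_{x}(s(t^{*}),t^{*})>0$ unless $v$ is identically zero in a parabolic neighborhood; but identical zero is incompatible with $v(0,t)<0$, while $v_{x}(s(t^{*}),t^{*})>0$ contradicts $v_{x}$ being consistent with the differentiated identity $\frac{d}{dt}v(s(t),t) = v_{t}(s(t),t)+\dot{s}(t)\,v_{x}(s(t),t) = 0$ evaluated under $v_{t}=\alpha v_{xx}$ at a point where the Dirichlet value is a boundary maximum. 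This contradiction forces $v<0$ on the open interior, hence $\tilde{w}(x,t)<0$ for $x\in(0,s(t))$, $t>0$.

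The main obstacle I anticipate is the rigorous justification of Hopf's lemma on the moving boundary rather than on a fixed cylinder; everything else is routine once the exponential change of variables removes the $-\lambda\tilde{w}$ term. I would model this step on the maximum-principle/Hopf proof already cited for Lemma \ref{lemma1} (see \cite{Gupta03, nonlinearPDE}), which propagates signs in the same type of one-phase moving-boundary geometry.
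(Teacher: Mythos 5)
The paper proves this lemma only by citation (``by maximum principle''), so the comparison is against the standard argument your sketch tries to reconstruct. Your setup is fine: the exponential shift $v=e^{\lambda t}\tilde{w}$ reduces \eqref{wtilde1} to the heat equation, the weak maximum principle on the non-cylindrical domain localizes $\sup v$ to the parabolic boundary, and the boundary data force $\sup v\le 0$. But the endgame contains a genuine error. Having concluded that the supremum equals zero and ``can only be attained on the moving interface,'' there is nothing to rule out: $v$ \emph{does} vanish identically on $x=s(t)$ by \eqref{wtilde3}, and a maximum attained there is entirely consistent with $v<0$ in the interior. Hopf's lemma at $(s(t^{*}),t^{*})$ yields $v_{x}(s(t^{*}),t^{*})>0$, which is not a contradiction --- it is precisely the inequality the paper wants later (it becomes $\tilde{u}_{x}(s(t),t)>0$ in Lemma \ref{maximum}); and the identity $\frac{d}{dt}v(s(t),t)=v_{t}+\dot{s}\,v_{x}=0$ is compatible with $v_{x}>0$, since it merely fixes $v_{t}(s(t),t)=-\dot{s}\,v_{x}<0$. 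So your final contradiction evaporates. What actually closes the proof is the \emph{interior} strong maximum principle: if $v(x_{0},t_{0})\ge 0$ at some $x_{0}\in(0,s(t_{0}))$, then, since $v\le 0$ everywhere, this is an interior maximum with value $0$, hence $v\equiv 0$ on the domain up to time $t_{0}$, contradicting strict negativity of the data. No boundary-point lemma at $x=s(t)$ is needed.

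A second issue is the hypothesis. As the lemma is invoked in the proof of Lemma \ref{maximum}, the intended assumption is negativity of the \emph{initial} data, $\tilde{w}(x,0)<0$ (the printed ``$\tilde{w}(0,t)<0$'' is evidently a typo). Under that reading you may not assume the lateral trace $v(0,t)<0$; the boundary $x=0$ carries only the Neumann condition $\tilde{w}_{x}(0,t)=0$, so a positive maximum at $(0,t_{0})$ must be excluded by Hopf's lemma \emph{there}: it would force $v_{x}(0,t_{0})<0$, contradicting the Neumann condition. Your proof imports $v(0,t)<0$ as given and thereby skips the one place where Hopf's lemma is genuinely needed, while deploying it at the moving interface where it proves nothing.
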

The proof of Lemma \ref{wmaximum} is given by maximum principle\cite{nonlinearPDE}.  
\subsubsection{Inverse transformation }
By the same procedure, one can derive the inverse transformation from $\tilde{w}$-system in \eqref{wtilde1}-\eqref{wtilde3} to $\tilde{u}$-system in \eqref{eq:errorPDE}-\eqref{eq:errorBC2} as
\begin{align}\label{inverseerror}
\tilde{w}(x,t) =& \tilde{u}(x,t) - \int_{x}^{s(t)} Q(x,y) \tilde{u}(y,t) dy, \\
Q(x,y) =& \frac{\lambda}{\alpha} y\frac{J_1\left(\sqrt{\frac{\lambda}{\alpha} (y^2-x^2)}\right)}{\sqrt{\frac{\lambda}{\alpha} (y^2-x^2)}}, \label{bessel}
\end{align}
where $J_1(x)$ is a Bessel function of the first kind. 
\subsection{Output Feedback Control}
By equivalence, the transformation of the variables $(\hat{u},X)$ into $(\hat{w},X)$ leads to  the gain kernel functions defined by the state-feedback backstepping transformation \eqref{eq:DBST} given by
\begin{align}\label{eq:observerDBST}
\hat{w}(x,t)=\hat{u}(x,t)-\frac{c}{\alpha} \int_{x}^{s(t)} (x-y)\hat{u}(y,t) dy\nonumber\\
+\frac{c}{\beta}(s(t)-x) X(t),
\end{align}
with an associated  target system given by
\begin{align}\label{obsvtarPDE}
\hat{w}_t(x,t)=&\alpha \hat{w}_{xx}(x,t)+\frac{c}{\beta}\dot{s}(t)X(t)\nonumber\\
 &+ f(x,s(t))\tilde{w}_{x}(s(t),t),\\
\hat{w}(s(t),t)=&0, \quad \hat{w}_{x}(0,t)=0,\\
\dot{X}(t)=&-cX(t)-\beta \hat{w}_{x}(s(t),t) - \beta \tilde{w}_{x}(s(t),t) \label{obsvtarODE},
\end{align}
where 
\begin{align}
f(x,s(t)) =& P_1(x,s(t))- \frac{c}{\alpha}  \int_{x}^{s(t)} (x-y) P_1(y,s(t)) dy  \nonumber\\
&-c  (s(t)-x).
\end{align}
Evaluating the spatial derivative of \eqref{eq:observerDBST} at  $x=0$, we derive  the output feedback controller  as
\begin{align}\label{output}
q_c(t)&=-ck\left( \frac{1}{\alpha}\int_{0}^{s(t)} \hat{u}(x,t) dx+\frac{1}{\beta} X(t)\right). 
\end{align}
By the same procedure as \cite{Shumon16}, one can derive an inverse transformation written as 
\begin{align}\label{whatinv}
\hat{u}(x,t)=&\hat{w}(x,t)+ \frac{\beta}{\alpha}\int_{x}^{s(t)} \psi(x-y)\hat{w}(y,t) dy\notag\\
&+ \psi(x-s(t)) X(t), \\
\psi(x) =&\frac{c}{\beta} \sqrt{\frac{\alpha}{c}} {\rm sin} \left(\sqrt{\frac{c}{\alpha}} x \right). 
\end{align}


\section{Physical Constraints}\label{sec:constraints}
The two physical constraints defined in \eqref{physical constraints} are required to guarantee the physical validity of the model  \eqref{eq:stefanPDE}-\eqref{eq:stefanODE} and achieve the control objective $s(t) \to s_r$. In this section, we derived sufficient conditions to  guarantee \eqref{physical constraints}  which is satisfied if only if  the controller \eqref{output} is always injecting positive heat without any interface overshoot beyond the setpoint. 
First, we state the following lemma. 
\begin{lem}\label{maximum}
Assume that the upper bound of the initial temperature is known, i.e. the Lipschitz constant $H$ in \eqref{eq:stefanICbound} is known. Suppose that we set the initial estimation $\hat{u}(x,0)$ and the gain parameter of the observer $\lambda $ chosen to satisfy
\begin{align}\label{cond1}
\hat{u}(x,0) =& \hat{H}(s_0-x), \\
\label{cond2}\lambda <&\frac{4\alpha}{s_0^2} \left(1-\frac{H}{\hat{H}}\right), 
\end{align}
respectively with a choice of a parameter $\hat{H}>H$. Then, the following properties hold:
\begin{align}\label{positive}
 &\tilde{u}(x,t) < 0, \quad \tilde{u}_{x}(s(t),t) > 0, \quad \forall x\in(0,s(t)), \quad \forall t>0
\end{align}
\end{lem}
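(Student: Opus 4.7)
The strategy is to translate the initial sign of the observation error $\tilde u(x,0)$ into a sign for the target-system variable $\tilde w$, use the maximum principle on the target system to preserve that sign for all $t>0$, and then pull the conclusion back through the direct backstepping transformation. Concretely, the plan is to: (i) show $\tilde u(x,0)<0$ on $(0,s_0)$ directly from \eqref{cond1} and \eqref{eq:stefanICbound}; (ii) use the inverse transformation \eqref{inverseerror} together with the restriction \eqref{cond2} on $\lambda$ to deduce $\tilde w(x,0)<0$; (iii) invoke Lemma \ref{wmaximum} to propagate negativity of $\tilde w$ to all $t>0$; (iv) apply the direct transformation \eqref{eq:errorDBST} with positive kernel $P$ to conclude $\tilde u(x,t)<0$; and (v) obtain $\tilde u_x(s(t),t)>0$ via Hopf's lemma.

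Step (i) is immediate: combining $\hat u(x,0)=\hat H(s_0-x)$ with $0\le u(x,0)\le H(s_0-x)$ yields the two-sided bound
\begin{equation*}
-\hat H(s_0-x)\;\le\;\tilde u(x,0)\;\le\;-(\hat H-H)(s_0-x),
\end{equation*}
whose right-hand side is strictly negative on $(0,s_0)$ since $\hat H>H$. Step (ii) is the quantitative heart of the argument. Writing \eqref{inverseerror} at $t=0$, using the lower bound $\tilde u(y,0)\ge -\hat H(s_0-y)$ to upper-bound $-\int_x^{s_0}Q(x,y)\tilde u(y,0)\,dy$, dominating the Bessel kernel by $Q(x,y)\le \lambda s_0/(2\alpha)$ (a consequence of the elementary inequality $J_1(z)/z\le 1/2$ for $z\ge 0$ and of $y\le s_0$), and combining with the upper bound on $\tilde u(x,0)$ from step (i), I will reduce \eqref{inverseerror} to
\begin{equation*}
\tilde w(x,0)\;\le\;(s_0-x)\left[-(\hat H-H)+\frac{\hat H\,\lambda\, s_0\,(s_0-x)}{4\alpha}\right].
\end{equation*}
Since $s_0-x\le s_0$, the bracket is strictly negative whenever $\lambda<4\alpha(1-H/\hat H)/s_0^2$, which is exactly \eqref{cond2}; hence $\tilde w(x,0)<0$ on $[0,s_0)$.

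The remaining steps are structural. Lemma \ref{wmaximum} extends the sign to $\tilde w(x,t)<0$ for all $t>0$. Strict positivity of $P(x,y)$, which follows from $I_1(z)>0$ for $z>0$, combined with \eqref{eq:errorDBST}, then yields $\tilde u(x,t)<0$. For the boundary derivative, differentiating \eqref{eq:errorDBST} in $x$ and evaluating at $x=s(t)$ causes the integral to vanish and kills the local term $P(s(t),s(t))\tilde w(s(t),t)$ because $\tilde w(s(t),t)=0$, leaving $\tilde u_x(s(t),t)=\tilde w_x(s(t),t)$. Hopf's lemma applied at the Dirichlet boundary $x=s(t)$ to the heat equation satisfied by $v:=e^{\lambda t}\tilde w$ (where $v$ attains its maximum value $0$ while remaining strictly negative in the interior) then delivers $\tilde w_x(s(t),t)>0$. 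The main obstacle is the calibration in step (ii): the positive correction added by the inverse kernel $Q$ must be small enough not to destroy the negativity of $\tilde u(x,0)$, and \eqref{cond2} is precisely what is required.
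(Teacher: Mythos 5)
Your proposal is correct and follows essentially the same route as the paper: initial negativity of $\tilde u(x,0)$, transfer to $\tilde w(x,0)<0$ via the inverse transformation \eqref{inverseerror} under the gain restriction \eqref{cond2}, propagation by the maximum principle (Lemma \ref{wmaximum}), return to $\tilde u<0$ through the positive kernel in \eqref{eq:errorDBST}, and Hopf's lemma for the boundary derivative. The only difference is that you make explicit the kernel estimate $Q(x,y)\le \lambda s_0/(2\alpha)$ (via $J_1(z)/z\le 1/2$) and the resulting bracket inequality that the paper leaves implicit, and you apply Hopf's lemma to the clean heat equation for $e^{\lambda t}\tilde w$ rather than to the nonlocal $\tilde u$-equation, which is a slightly more careful justification of the same conclusion.
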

\begin{proof}
Lemma \ref{wmaximum} showed that if $\tilde{w}(x,0)<0$, then $\tilde{w}(x,t)<0$. In addition, by the direct transformation \eqref{eq:errorDBST}, $\tilde{w}(x,t)<0$ leads to $\tilde{u}(x,t)<0$ due to the positivity of the solution to the gain kernel \eqref{p1gain}. Therefore, with the help of \eqref{inverseerror}, we deduce that $\tilde{u}(x,t)<0$ if $\tilde{u}(x,0)$ satisfies 
\begin{align}\label{condinv}
 \tilde{u}(x,0) < \int_{x}^{s_0} Q(x,y) \tilde{u}(y,0) dy, \quad \forall x\in(0,s_0). 
\end{align}
Considering the bound of the solution \eqref{bessel} under the condition of \eqref{cond1}, the sufficient condition for \eqref{condinv} to hold is given by \eqref{cond2} which restricts the gain parameter $\lambda$. Thus, we have shown that conditions \eqref{cond1} and \eqref{cond2} lead to $\tilde{u}(x,t)<0$ for all $ x\in(0,s_0)$. In addition, by the boundary condition \eqref{eq:errorBC2} and Hopf's lemma, it leads to $\tilde{u}_{x}(s(t),t)>0$. 
\end{proof}

Next, we show the physical constraints \eqref{physical constraints} are satisfied with a restriction on the setpoint using \eqref{positive}. 
\begin{prop}\label{proposition}
Suppose the initial values $(\hat{u}_0(x), s_0)$ satisfy \eqref{cond1} and the setpoint $s_r$ is chosen to satisfy 
\begin{align}\label{allcondition}
&s_r > s_0+\frac{\beta s_0^2}{2\alpha } \hat{H}.
\end{align}
Then, the following physical constraints are satisfied 
\begin{align}\label{constraint1}
q_c(t)>&0, \quad u(x,t)>0, \quad \dot{s}(t)>0, \\
\label{constraint2}s_0<&s(t)<s_r.
\end{align}
\end{prop}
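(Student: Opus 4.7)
The plan is to prove the physical constraints \eqref{constraint1}--\eqref{constraint2} by a bootstrap/continuity argument built around the scalar functional $\hat V(t) := \frac{1}{\alpha}\int_0^{s(t)}\hat u(x,t)\,dx + \frac{1}{\beta}X(t)$, which by the controller formula \eqref{output} satisfies $q_c(t)=-ck\hat V(t)$. The crux is therefore to establish $\hat V(t)<0$ for all $t\ge 0$: once positivity of $q_c$ is secured, Lemma \ref{lemma1} yields $u(x,t)>0$ and $\dot s(t)>0$ (hence $s(t)>s_0$), while the sign of $\hat V$ combined with $\hat u=u-\tilde u>0$ yields $s(t)<s_r$.

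First I would check the initial value. Under \eqref{cond1} a direct computation gives
\begin{equation*}
\hat V(0)=\frac{\hat H\, s_0^{2}}{2\alpha}+\frac{s_0-s_r}{\beta},
\end{equation*}
which is strictly negative precisely under the setpoint restriction \eqref{allcondition}. By continuity of $q_c$, there is a maximal interval $[0,t^\star)$ on which $q_c(t)>0$; the remainder of the argument shows $t^\star=+\infty$.

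On $[0,t^\star)$ Lemma \ref{lemma1} delivers $u>0$ and $\dot s>0$, while Lemma \ref{maximum}, whose hypotheses \eqref{cond1}--\eqref{cond2} are in force, delivers $\tilde u(x,t)<0$ and $\tilde u_x(s(t),t)>0$. The key calculation is then to differentiate $\hat V$ along the observer PDE \eqref{obsfirst}--\eqref{obslast}: using Leibniz on the moving domain (and $\hat u(s(t),t)=0$), integrating the $\alpha\hat u_{xx}$ term by parts, substituting the boundary conditions $\hat u_x(0,t)=-q_c/k=c\hat V$ and $\hat u(s(t),t)=0$, and replacing the innovation prefactor through the \emph{plant} Stefan condition $\dot s/\beta+\hat u_x(s,t)=-\tilde u_x(s,t)$, I expect to arrive at
\begin{equation*}
\dot{\hat V}(t)+c\hat V(t)=-\tilde u_x(s(t),t)\left(1-\frac{1}{\alpha}\int_0^{s(t)}P_1(y,s(t))\,dy\right).
\end{equation*}
Since the kernel \eqref{P1x} is pointwise negative (as $I_1>0$ on positive arguments), the parenthesis exceeds $1$, and combined with $\tilde u_x(s,t)>0$ this forces the right-hand side to be strictly negative on $[0,t^\star)$.

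A Gr\"onwall-type comparison then gives $\hat V(t)\le \hat V(0)e^{-ct}<0$ on $[0,t^\star)$, hence $q_c(t)>0$ there. A standard continuity argument rules out $t^\star<\infty$: if it were finite, maximality would force $q_c(t^\star)=0$, contradicting the strict bound just derived. Therefore $q_c>0$, $u>0$, $\dot s>0$ and $s(t)>s_0$ hold for all $t\ge 0$; positivity of $\hat u=u-\tilde u$ then converts $\hat V(t)<0$ into $(s(t)-s_r)/\beta<-\frac{1}{\alpha}\int_0^{s(t)}\hat u\,dx\le 0$, i.e.\ $s(t)<s_r$. The main obstacle is the cancellation that produces the key identity above: keeping the moving-domain Leibniz terms straight and correctly using the plant---not observer---dynamics to rewrite $\dot s/\beta+\hat u_x(s,t)$ as $-\tilde u_x(s,t)$ is where a subtle sign error is easiest to make, and everything else reduces to bookkeeping once that identity is in hand.
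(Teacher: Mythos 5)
Your proof is correct and follows essentially the same route as the paper: your functional $\hat V(t)$ is just $-q_c(t)/(ck)$, so your key identity $\dot{\hat V}+c\hat V=-\tilde u_x(s(t),t)\bigl(1-\frac{1}{\alpha}\int_0^{s(t)}P_1(y,s(t))\,dy\bigr)$ is exactly the paper's ODE \eqref{odecontrol} for $q_c$, and both arguments then invoke Lemma \ref{maximum} for the sign of the forcing term, a comparison principle for $q_c>0$, Lemma \ref{lemma1} for $u>0$ and $\dot s>0$, and $\hat u=u-\tilde u>0$ to conclude $s(t)<s_r$. Your explicit continuity/bootstrap argument on the maximal interval is a welcome tightening of a step the paper leaves implicit, but it is not a different method.
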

\begin{proof}
Taking the time derivative of \eqref{output} along with the solution \eqref{obsfirst}--\eqref{obslast}, with the help of the observer gain \eqref{p3gain},  we derive the following differential equation:
\begin{align}\label{odecontrol}  
&\dot{q}_c(t)=-cq_c(t)+\left( 1+ \int_0^{s(t)} P(x,s(t)) dx \right)\tilde{u}_x(s(t),t).
\end{align}
From the positivity of the solution \eqref{p1gain} and the Neumann boundary value \eqref{positive} by Lemma \ref{maximum}, it leads to the following differential inequality
\begin{align}
&\dot{q}_c(t)\geq -cq_c(t).
\end{align}
Hence, if the initial values satisfy $q_{c}(0)>0$, equivalently \eqref{allcondition} by  \eqref{output} and \eqref{cond1}, we get
\begin{align}\label{positivecon}
 q_{c}(t)>0, \quad \forall t>0. 
 \end{align}
By Lemma \ref{lemma1}, we derive the two other conditions stated in \eqref{constraint1}. Then, with the relation \eqref{positive} given in Lemma \ref{maximum} and the positivity of $u(x,t)$ derived in \eqref{constraint1}, the following inequality is established  
\begin{align}\label{positivehat}
\hat{u}(x,t) >0, \quad \forall x \in(0,s(t)), \quad  \forall t>0.
\end{align}
Finally, substituting the inequalities \eqref{positivecon} and \eqref{positivehat} into \eqref{output}, we arrive at
\begin{align}
X(t) <0, \quad \forall t>0
\end{align}
which guarantees the second physical constraint \eqref{constraint2}. 
\end{proof}
In the next section, we assume that the restrictions \eqref{cond1}, \eqref{cond2} and \eqref{allcondition} are satisfied. We use the physical constraints \eqref{constraint1} and \eqref{constraint2} to show the Lyapunov stability. 

\section{Lyapunov Stability}\label{stability}
In this section, we establish the exponential stability of the origin in the closed-loop system in ${\cal H}_1$-norm  based on the Lyapunov stability analysis for PDEs\cite{krstic2008boundary} of the associated target system \eqref{wtilde1}-\eqref{wtilde3} and \eqref{obsvtarPDE}-\eqref{obsvtarODE}. 
\subsection{Stability Analysis of $\tilde{w}$-System }
Let $\tilde{V}_1$ be a functional such that
\begin{align}\label{eq:lyapunov}
\tilde{V}_1 = &\frac{1}{2} ||\tilde{w}||_{{\cal H}_1}^2.
\end{align}
Taking the derivative of \eqref{eq:lyapunov} along the solution of the target system \eqref{wtilde1}-\eqref{wtilde3} , we obtain 
\begin{align}
\dot{\tilde{V}}_1 = -\alpha|| \tilde{w}_{x}||_{{\cal H}_{1}}^2  - \lambda  ||\tilde{w}||_{{\cal H}_1}^2  -\frac{\dot{s}(t)}{2}\tilde{w}_{x}(s(t),t)^2.
\end{align}
With the help of \eqref{constraint1} and \eqref{constraint2}, and applying Pointcare's inequality, a differential inequality in  $\tilde{V}_1$ is obtained as
\begin{align}
\dot{\tilde{V}}_1\leq - \left( \frac{\alpha}{4s_r^2} + \lambda\right) \tilde{V}_1.
\end{align}
Hence, the origin of the target $\tilde{w}$-system is exponentially stable. Since the transformation \eqref{eq:errorDBST} is invertible as in \eqref{inverseerror}, the exponential stability of $\tilde{w}$-system at the origin deduces the exponential stability of $\tilde{u}$-system at the origin with the help of \eqref{constraint2}, which completes the proof of Theorem \ref{observer}. 

\subsection{Stability Analysis of the Closed-Loop System}
Define the functional $V_{tot}$ as 
\begin{align}\label{eq:obsvlyapunov}
V_{tot} = &\frac{1}{2} ||\hat{w}||_{{\cal H}_1}^2+\frac{p}{2}X(t)^2+d\tilde{V}_1,
\end{align}
where $d$ is chosen to be large enough and $p$ is defined as
\begin{align}
&p = \frac{c \alpha}{16 \beta^2 s_r}.
\end{align}
Taking the derivative of \eqref{eq:lyapunov} along the solution of the target system \eqref{obsvtarPDE}-\eqref{obsvtarODE}, and applying Young's, Cauchy-Schwarz, Poincare's, Agmon's inequality, with the help of  \eqref{constraint1} and \eqref{constraint2}, the following  holds:
\begin{align}\label{estimatelyap}
\dot{V}_{tot}\leq & -bV_{tot} +a\dot{s}(t)V_{tot},
\end{align}
where,
\begin{align}
&a = {\rm max} \left\{ s_r^2, \frac{16cs_r}{\alpha}\right\}, \quad b = {\rm min}\left\{\frac{\alpha}{8s_r^2}, c, 2\lambda\right\} .
\end{align}
Define the Lyapunov function $V$ such that 
\begin{align}\label{finlyap}
V = V_{tot} e^{-a s(t)}. 
\end{align}
Taking the time derivative of \eqref{finlyap} and applying \eqref{estimatelyap}, we get
\begin{align}
\dot{V} = \left(\dot{V}_{tot} -a\dot{s}(t)V_{tot}\right)e^{-bt} \leq -bV
\end{align}
From the equation above, we deduce the exponential decay of $V$ as $V(t) \leq V(0) e^{-bt}$. Then, using \eqref{constraint2}, we arrive at 
\begin{align}
V_{tot} \leq e^{as_r} V_{tot}(0)  e^{-bt}.
\end{align}
Hence, the origin of the $(\hat{w}, X, \tilde{w})$-system is exponentially stable. Since the transformation \eqref{eq:errorDBST} and \eqref{eq:observerDBST} are  invertible as described in \eqref{inverseerror} and \eqref{whatinv},  the exponential stability of $(\hat{w}, \hat{X}, \tilde{w})$-system at the origin guarantees the exponential stability of $(\hat{u}, \hat{X}, \tilde{u})$-system at the origin, which completes the proof of Theorem \ref{outputthm}. 

\section{Simulation Results }\label{simulation}

As in \cite{Shumon16}, the simulation is performed considering a strip of zinc whose physical properties are given in Table 1. The setpoint and the initial values are chosen as $s_{r}$ = 0.35 m, $s_0$ = 0.01 m, $T_0(x)-T_m=H(s_{0}-x)$ with $H$ = 100 K$\cdot {\rm m}^{-1}$, and $\hat{T}_0(x)-T_m=\hat{H}(s_{0}-x)$ with $\hat{H}$ = 1000 K$\cdot {\rm m}^{-1}$. The controller gain $c$ = 0.001 and the gain parameter of the observer $\lambda$ = 0.001 are chosen. Then, the restriction on $\hat{T}_0(x)$, $\lambda$, and $s_r$ described in \eqref{restriction1}-\eqref{restriction3} are satisfied, which are conditions for Theorem \ref{observer} and Theorem \ref{outputthm} to remain valid. \\

\begin{figure}[t]
\centering
\subfloat[The interface $s(t)$ converges to the setpoint $s_r$ keeping increasing and without the overshoot, i.e. $\dot{s}(t)>0$, $s_0<s(t)<s_r$.]{\includegraphics[width=3.0in]{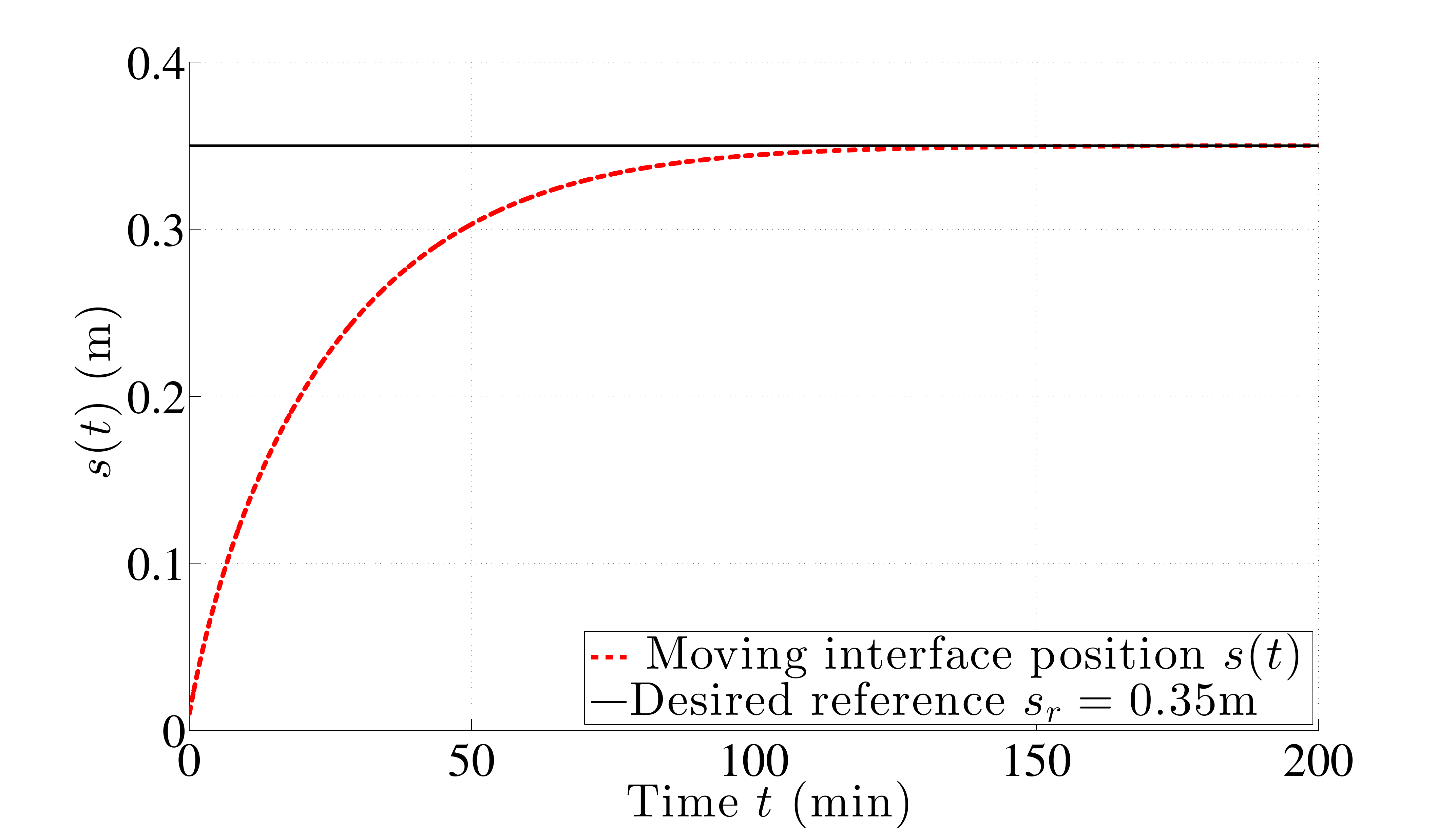}}\\
\subfloat[The output feedback controller $q_{c}(t)$ remains positive, i.e. $q_{c}(t)>0$. ]{\includegraphics[width=3.0in]{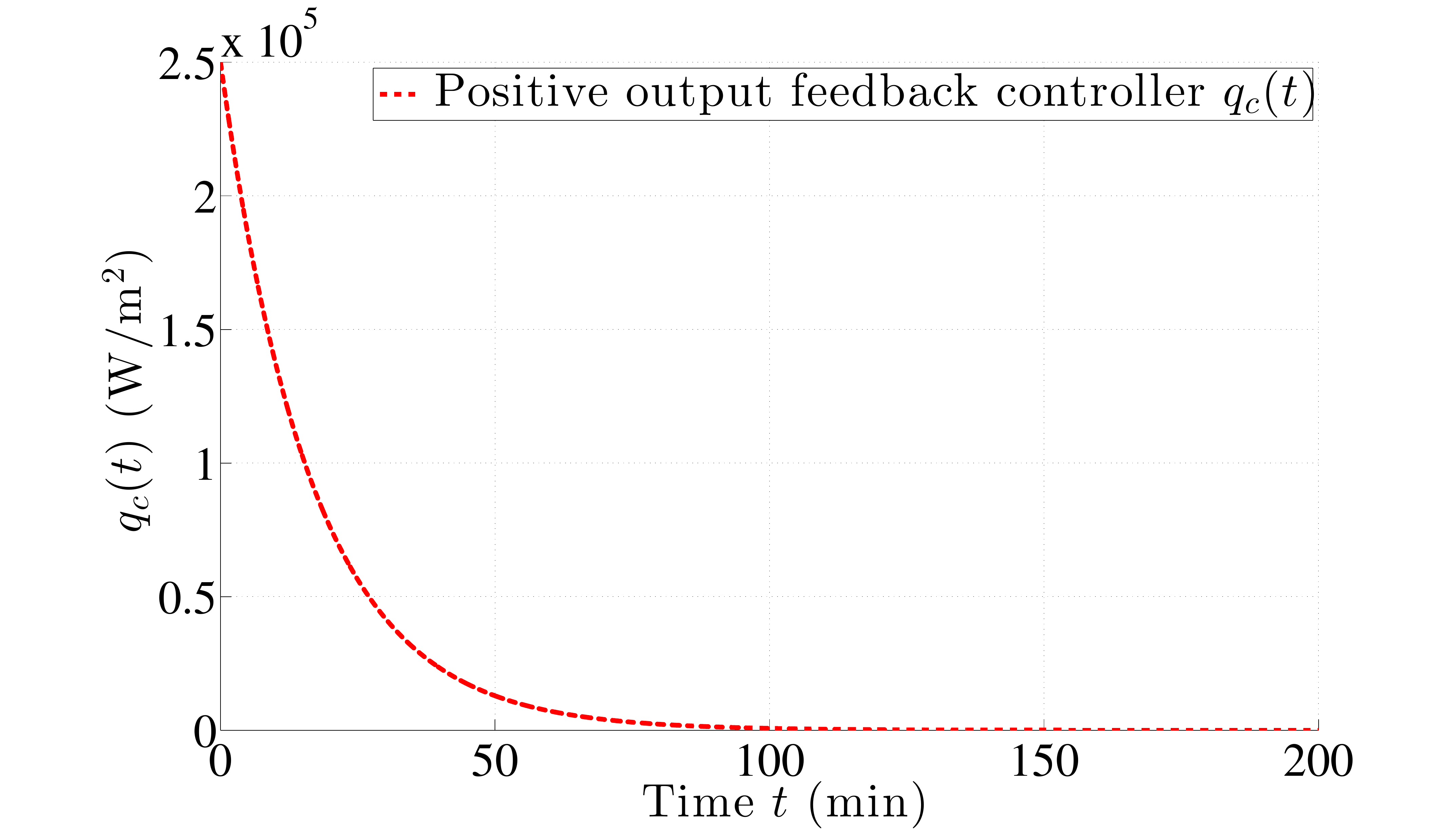}}\\
\subfloat[The estimation error of the boundary temperature $\tilde{T}(0,t)$ converges to zero with keeping the negativity, i.e. $\tilde{T}(0,t)<0$. ]{\includegraphics[width=3.0in]{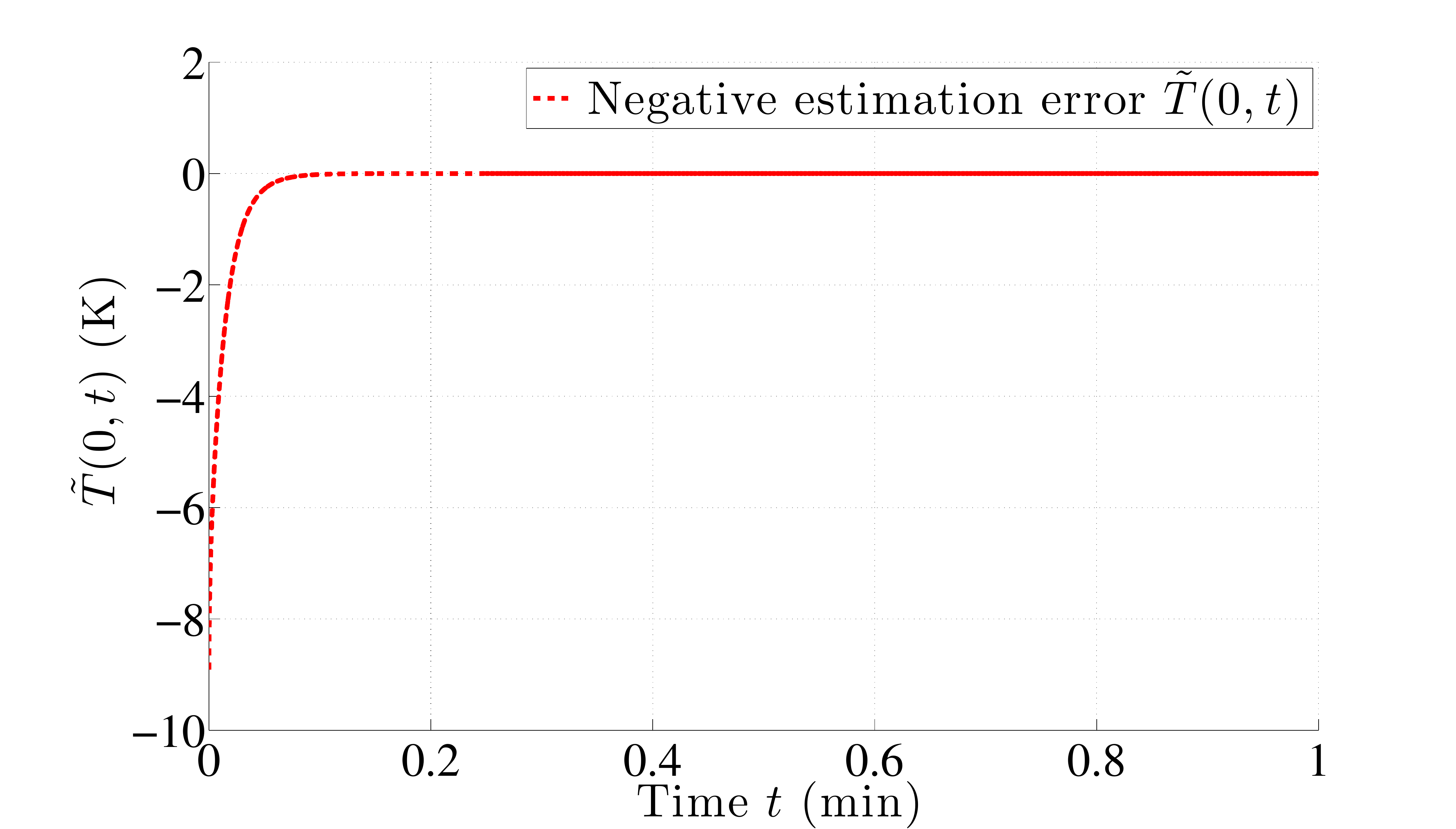}}
\caption{The simulation of the closed-loop system \eqref{eq:stefanPDE} - \eqref{eq:stefanODE} and the estimator  \eqref{observerPDE} - \eqref{P1x} with the output feedback control law \eqref{outctr}.}
\label{fig:simulation}
\end{figure}

The dynamics of the moving interface $s(t)$, the output feedback controller $q_{c}(t)$, and the estimation error of the boundary temperature $\tilde{T}(0,t)$ are depicted in Fig. \ref{fig:simulation} (a) - (c), respectively. Fig. \ref{fig:simulation} (a) shows that the interface $s(t)$ converges to the setpoint $s_r$ with $\dot{s}(t)>0$ and $s_0<s(t)<s_r$ for $\forall t>0$, which are guaranteed in Proposition \ref{proposition}.  Fig. \ref{fig:simulation} (b) shows that the output feedback controller remains positive, which is a physical constraint for the model to be valid as stated in Lemma \ref{lemma1} and ensured in Proposition \ref{proposition}. The positivity of the controller results from the negativity of the distributed estimation error $\tilde{T}(x,t)$ as shown in Lemma \ref{maximum} and Proposition \ref{proposition}. Fig. \ref{fig:simulation} (c) shows that the estimation error of boundary temperature $\tilde{T}(0,t)$ converges to zero and remains negative. Therefore, the numerical results are consistent with the theoretical results stated in Lemma \ref{maximum} and Proposition \ref{proposition}. 

\begin{table}[htb]
\caption{Physical properties of zinc}
\begin{center}
    \begin{tabular}{| l | l | l | }
    \hline
    $\textbf{Description}$ & $\textbf{Symbol}$ & $\textbf{Value}$ \\ \hline
    Density & $\rho$ & 6570 ${\rm kg}\cdot {\rm m}^{-3}$\\ 
    Latent heat of fusion & $\Delta H^*$ & 111,961${\rm J}\cdot {\rm kg}^{-1}$ \\ 
    Heat Capacity & $C_p$ & 389.5687 ${\rm J} \cdot {\rm kg}^{-1}\cdot {\rm K}^{-1}$  \\  
    Thermal conductivity & $k$ & 116 ${\rm W}\cdot {\rm m}^{-1}$  \\ \hline
    \end{tabular}
\end{center}
\end{table}

\section{Conclusions and Future Works}\label{conclusion}
In this paper we designed  an observer  and a boundary output feedback controller for the one-phase Stefan problem via the backstepping transformation. The proposed controller achieves the exponential stability of the closed-loop system using only  a measurement of the moving interface and ensures that the  physical constraints are satisfied under the restriction on the setpoint and the gain parameter of the designed observer assuming the upper bound of the initial temperature to be known. The main contribution of this paper is that, this is the first result which shows the convergence of the estimation error and output feedback control of the one-phase Stefan problem theoretically. Although the Stefan problem has been a well known model since 200 years ago related with phase transitions which appear in various nature and engineering processes, its control and  estimation related problems have not been investigated in details. The estimation of the sea-ice melting problem in Arctic region is being considered as a future work. 

\bibliographystyle{unsrt}
\bibliography{ref}

\end{document}